\newtheorem{thm}{Theorem}[section]
\newtheorem{cor}[thm]{Corollary}
\newtheorem{lem}[thm]{Lemma}
\numberwithin{equation}{section}
\theoremstyle{theorem}
\theoremstyle{definition}
\newtheorem*{remark*}{Remark}
\numberwithin{equation}{section}
\author{Elham Matinpour}
\address{Department of Mathematics, Johns Hopkins University, 3400 N. Charles Street, Baltimore, MD 21218}
\email{ematinp1@jhu.edu}
\title{First eigenvalue and nodal domains of the drift Laplacian on symmetric self-shrinkers in $\mathbb{R}^3$}
\begin{document}

\maketitle

\begin{abstract}
Consider $\mathbb{R}^3$ equipped with the Euclidean metric and the Gaussian measure $d\mu =e^{\frac{-\vert x\vert^2}{4}}dx$. Let $\Sigma \subset \mathbb{R}^3$ be a complete embedded self-shrinker with the induced metric and weighted measure, and let $\lambda_1$ denote the first eigenvalue of the drift Laplacian $\Delta_{\mu}=\Delta -\frac{1}{2}\langle x,\nabla .\rangle$ in the weighted space $L^2(\Sigma, e^{\frac{-\vert x\vert^2}{4}}dx)$. Inspired by Choe and Soret’s estimate, \cite{choe2009first}, of the first eigenvalue of the Laplacian on symmetric minimal surfaces in $\mathbb{S}^3$, we prove that $\lambda_1=\frac{1}{2}$ for self-shrinkers invariant under the dihedral group $\mathbb{D}_{g+1}$ or the prismatic group $\mathbb{D}_{g+1}\times \mathbb{Z}_2$. In particular, this holds for self-shrinkers constructed by Kapouleas, Kleene, and Moller \cite{kapouleas2018mean}, Buzano, Nguyen, and Schulz \cite{buzano2025noncompact}, Ketover \cite{ketover2024self}, and Ilmanen and White \cite{ilmanen2024fattening} confirming a universal spectral property tied to their symmetry.
\end{abstract}

\section{Introduction} A surface $\Sigma \subset \mathbb{R}^3$ is a self-shrinker if its mean curvature $H$ satisfies $H=-\frac{1}{2}x^{\perp}$, where $x$ is the position vector and $x^{\perp}$ is its normal component. Self-shrinkers are fundamental in mean curvature flow, as they model singularities and evolve homothetically under the flow $\Sigma_t=\sqrt{-t}\Sigma ,\ t\in (-\infty ,0)$, satisfying $\frac{dx}{dt}=H$. Equivalently, a self-shrinker is:\\
1. A minimal surface in $\mathbb{R}^3$ with the conformally flat metric $g=e^{-\frac{\vert x\vert^2}{4}}\langle .,.\rangle$.\\
2. A critical point of the Gaussian area functional:
$$
F(\Sigma)=(4\pi)^{-\frac{3}{2}}\int_{\Sigma} e^{-\frac{\vert x\vert^2}{4}}dx
$$
where $dx$ is the area element on $\Sigma$ (see \cite{huisken1990asymptotic}, \cite{colding2012generic}, \cite{colding2012smooth}). These properties link self-shrinkers to minimal surfaces, suggesting that techniques from minimal surface theory can be adapted to study their geometry and spectral properties.
\medskip

In minimal surface theory, the Laplacian’s eigenvalues reveal geometric and topological features. For a minimal $k$-submanifold $M\subset \mathbb{R}^{n+1}$, the coordinate functions $x_1,\cdots ,x_{n+1}$ are harmonic ($\Delta_M x_i=0$). If $M\subset \mathbb{S}^n \subset \mathbb{R}^{n+1}$ is a $k$-dimensional minimal submanifold of the unit $n$-sphere, these functions are eigenfunctions of $\Delta_M$ with eigenvalue $k$, implying the first non-trivial eigenvalue satisfies $\lambda_1\leq k$. Yau \cite{yau1982problem} conjectured that for compact, embedded minimal hypersurfaces in $\mathbb{S}^{n+1}$, $\lambda_1=n$, a conjecture still open. Choe and Soret \cite{choe2009first} proved $\lambda_1=2$ for compact embedded minimal surfaces in $\mathbb{S}^3$ invariant under finite reflection groups, confirming Yau’s conjecture for known examples.
\medskip

By analogy, we study the drift Laplacian $\Delta_{e^{-\frac{\vert x\vert^2}{4}}}=\Delta -\frac{1}{2}\langle x,\nabla .\rangle$, defined on a self-shrinker $\Sigma \subset \mathbb{R}^3$ with the Gaussian weight $e^{-\frac{\vert x\vert^2}{4}}$, in the weighted space $L^2(\Sigma , e^{-\frac{\vert x\vert^2}{4}}dx)$. This operator, associated with the Gaussian measure, governs the stability and spectral properties of self-shrinkers. Motivated by recent constructions of embedded self-shrinkers with arbitrary genus and symmetries, such as dihedral $\mathbb{D}_{g+1}$ or prismatic $\mathbb{D}_{g+1} \times \mathbb{Z}_2$ symmetries \cite{kapouleas2018mean}, \cite{buzano2025noncompact}, \cite{ketover2024self}, \cite{ilmanen2024fattening}, we investigate the first eigenvalue $\lambda_1$ and its eigenfunctions. We prove that $\lambda_1=\frac{1}{2}$ for such symmetric self-shrinkers, a result that reflects their geometric and stability characteristics, paralleling the spectral insights from minimal surfaces.

\section{preliminaries}
The drift Laplacian $\Delta_{-\frac{\vert x\vert^2}{4}}$ is a linear operator denoted by $\mathcal{L}$ on $\mathbb{R}^n$ 
$$
\mathcal{L}=\Delta_{-\frac{\vert x\vert^2}{4}}(\cdot) =e^{\frac{\vert x\vert^2}{4}} \text{div}(e^{-\frac{\vert x\vert^2}{4}}\nabla \cdot)
$$
This is so-called Ornstein-Uhlenbeck operator, associated with the Gaussian measure $d\mu =e^{-\frac{\vert x\vert^2}{4}} dx$. The $\mathcal{L}$ operator was introduced on self-shrinkers by Colding-Minicozzi \cite{colding2012generic}. They showed that it is self-adjoint in $L^2(\mathbb{R}^n,d\mu)$. We compute
$$
\mathcal{L}u=e^{\frac{\vert x\vert^2}{4}} \nabla \cdot (e^{-\frac{\vert x\vert^2}{4}}\nabla u) = e^{\frac{\vert x\vert^2}{4}} (e^{-\frac{\vert x\vert^2}{4}} \Delta u + \nabla(e^{-\frac{\vert x\vert^2}{4}})\cdot \nabla u)
$$
and 
$$
\nabla(e^{-\frac{\vert x\vert^2}{4}}) = -\frac{1}{2} x e^{-\frac{\vert x\vert^2}{4}}
$$
hence,
$$
\mathcal{L}u= \Delta u-\frac{1}{2}x\cdot \nabla u
$$
So, $\mathcal{L}= \Delta -\frac{1}{2} \langle x, \nabla (\cdot)\rangle$ is a drifted Laplacian tied to the Gaussian density. The eigenvalues of this operator are the solutions to the problem $\mathcal{L}u+\lambda u=0$, where $\lambda \geq 0$ and eigenvalues are discrete due to the weight’s confinement effect:
$$
0=\lambda_0<\lambda_1 \leq \lambda_2 \leq \cdots
$$
Let $M$ be a $k$-dimensional complete self-shrinker in $\mathbb{R}^n$. Then by $H_M=\frac{-1}{2}x^{\perp}$, we have $\Delta x=H_M=-\frac{1}{2}x^N$ for any $x=(x_1,\cdots ,x_n)\in \mathbb{R}^n$, then
$$
\mathcal{L}x_i=\Delta x_i -\frac{1}{2} \langle x,\nabla x_i\rangle =-\frac{1}{2}\langle x^N ,E_i\rangle -\frac{1}{2}\langle x, (E_i)^T\rangle =-\frac{1}{2}x_i,
$$
where $\{ E_i\}_{i=1}^{n}$ is a standard basis of $\mathbb{R}^n$, and $(\cdots )^N$ and $(\cdots)^T$ denote the orthogonal projection into the normal and tangent bundles, $NM$ and $TM$. It follows that the coordinate functions are eigenfunctions associated with eigenvalue $\frac{1}{2}$. The analogous conjecture is that the first eigenvalue of the drift Laplacian $\mathcal{L}$ on self-shrinkers is equal to $\frac{1}{2}$. 
\medskip

In \cite{ding2013volume}, Ding and Xin in a manner analogous to the arguments in \cite{choi1983first} used a Reilly type formula for $\mathcal{L}$ to estimate its first eigenvalue. They proved that the first eigenvalue is bounded below by $\frac{1}{4}$ for compact embedded self-shrinkers in $\mathbb{R}^n$. Define the first (Neumann) eigenvalue $\lambda_1$ of the operator $\mathcal{L}$ on complete compact self-shrinkers $M^n$ in $\mathbb{R}^{n+1}$ by
$$
\lambda_1=\underset{f\in C^{\infty}(M)}{\inf} \Big \{ \int_M \vert \nabla f\vert^2 d\mu ;\ \ \int_Mf^2d\mu =1,\  \int_Mfd\mu =0  \Big \}
$$
As we shown the coordinate functions are eigenfunctions associated with the eigenvalue $\frac{1}{2}$, and so $0<\lambda_1 \leq \frac{1}{2}$, for any complete properly immersed self-shrinker $M^n$. And, by [Theorem 1.3. \cite{ding2013volume}],  $\frac{1}{4}\leq \lambda_1 \leq \frac{1}{2}$, for compact and embedded $M^n$.


\section{A Courant-type theorem and Nodal sets of the drift Laplacian}
To estimate the first eigenvalue of the drift Laplacian we rely on the nodal properties of associated eigenfunctions. For the standard Laplacian, Courant’s theorem asserts that the first non-trivial eigenfunction (associated with the first neumann eigenvalue) divides the complete manifold into two nodal domains. As shown by R. Chen, J. Mao and C. Wu, \cite{chen2025eigenfunctions}, the Courant nodal domain still holds for a well-defined weighted Laplacian operator on an $n$-dimensional Riemannian manifold $M^n$. Here, we use their result to extend this property to the drift operator on self-shrinkers in Euclidean spaces. 
\medskip

Let $(M^n, \langle.,.\rangle)$, $n\geq 2$, be an $n$-dimensional complete Riemannian manifold with the metric $\langle .,.\rangle$. Consider $\phi \in C^{\infty}(M)$, a smooth real-valued function defined on $M$, and define the elliptic operator,
$$
\Delta_{\phi}:=\Delta -\langle \nabla \phi, \nabla .\rangle
$$
that is a weighted Laplacian with the weight $\phi$ defined on domains $\Omega$ in $M$, and $\nabla ,\ \Delta$ are the gradient and Laplacian operators on $M$, respectively. We may call $\Delta_{\phi}$ the $\phi$-weighted Laplacian on $M$, and we assume that $\phi$ is chosen so that the associated weighted Laplacian operator is well-defined on $M$. This is a self-adjoint operator with respect to the $d\mu (=\phi dx)$-measure. 
Now suppose that $\Omega$ is a bounded domain and consider the Dirichlet eigenvalue problem of the $\phi$-weighted Laplacian as follows
\begin{equation} \label{Dirichlet problem for wei.Laplacian}
\begin{cases}
    \Delta_{\phi} u+\lambda u=0\ \ \ \text{in}\ \Omega \subset M\\
    u=0 \hspace{1.8cm} \text{on}\ \partial \Omega
\end{cases}
\end{equation}
 The self-adjoint elliptic operator $\Delta_{\phi}$ in ($\ref{Dirichlet problem for wei.Laplacian}$) only has discrete spectrum, and all the eigenvalues in this discrete spectrum can be listed non-decreasingly as
$$
0<\lambda_1(\Omega)<\lambda_2(\Omega)\leq \lambda_3(\Omega) \leq \cdots \rightarrow \infty
$$
where the eigenvalues are repeated with respect to their (finite) multiplicities. By the standard variational principles we obtain
$$
\lambda_k=\underset{f\in W_0^{1,2}(\Omega)}{\inf} \Big \{ \int_{\Omega} \vert \nabla f\vert^2 d\mu ;\ \ \int_{\Omega}f^2d\mu =1,\  \int_{\Omega}ff_id\mu =0   \Big \}
$$
where $f_i,\ i=1,\cdots ,k-1$ denotes an eigenfunction of $\lambda_i(\Omega)$, and $W_0^{1,2}(\Omega)$ is a Sobolev space. And,
$$
\lambda_1=\underset{f\in W_0^{1,2}(\Omega)}{\inf} \Big \{ \int_{\Omega} \vert \nabla f\vert^2 d\mu ;\ \ \int_{\Omega}f^2d\mu =1,\  \int_{\Omega}fd\mu =0   \Big \}
$$
If the bounded domain $\Omega\subset M$ has no boundary, then the closed eigenvalue problem of $\Delta_{\phi}$ is
\begin{equation}\label{closed eignvs for wei.Laplacian}
\Delta_{\phi} u+\lambda u=0\ \ \ \text{in}\ \Omega
\end{equation}
Then $\Delta_{\phi}$ still has discrete spectrum listed as
$$
0=\lambda_0 (\Omega) < \lambda_1(\Omega)\leq \lambda_2(\Omega)\cdots \rightarrow \infty
$$
with multiplicity counted. We also have
$$
\lambda_k=\underset{f\in W^{1,2}(\Omega)}{\inf} \Big \{ \int_{\Omega} \vert \nabla f\vert^2 d\mu ;\ \ \int_{\Omega}f^2d\mu =1,\  \int_{\Omega}ff_id\mu =0   \Big \}
$$
where $f_i,\ i=1,\cdots ,k-1$ denotes an eigenfunction of $\lambda_i(\Omega)$, and define $\lambda_1$ accordingly. 
\begin{thm} \label{thm: courant}[Chen-Mao-Wu, \cite{chen2025eigenfunctions}] 
    The Dirichlet eigenvalue problem (\ref{Dirichlet problem for wei.Laplacian}) on a regular domain $\Omega$ in $M$ has a non-decreasing sequence  of solutions 
   $0<\lambda_1(\Omega)<\lambda_2(\Omega)\leq \lambda_3(\Omega) \leq \cdots$ with associated eigenfunctions $f_1,f_2,f_3,\cdots,$ which form a complete orthogonal basis of $L_{\phi}^2(\Omega)$. Moreover, the number of nodal domains of $f_k$ is less than or equal to $k$. A very similar result holds for the closed eigenvalue problem (\ref{closed eignvs for wei.Laplacian}), where the sequence of solutions is $0=\lambda_0 (\Omega) < \lambda_1(\Omega)\leq \lambda_2(\Omega)\cdots$ with the number of nodal domains of an eigenfunction of the $k$-th eigenvalue is less than or equal  to $k+1$.
\end{thm}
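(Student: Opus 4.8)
The plan is to transplant the classical proof of Courant's nodal domain theorem to the weighted operator $\Delta_{\phi}$, checking only that the two analytic inputs it relies on---discreteness together with completeness of the weighted eigenbasis, and the unique continuation property---survive the presence of the drift term. The key simplification is that $\Omega$ is bounded and $\phi$ is smooth, so on $\overline{\Omega}$ the density $e^{-\phi}$ is bounded above and below away from zero; consequently the weighted spaces $L^2_{\phi}(\Omega)$ and $W_0^{1,2}(\Omega)$ with the weighted norm coincide as sets with the usual $L^2(\Omega)$ and $W_0^{1,2}(\Omega)$ and carry comparable norms, and the weighted Dirichlet form $f\mapsto \int_{\Omega}|\nabla f|^2\,d\mu$ is comparable to the unweighted one. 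This lets the standard machinery pass through with only cosmetic changes.

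First I would record the spectral setup. Writing $\Delta_{\phi}u=e^{\phi}\operatorname{div}(e^{-\phi}\nabla u)$ exhibits $\Delta_{\phi}$ as a self-adjoint operator in divergence form with smooth coefficients, so its Dirichlet form is closed and, by the norm equivalence above together with the Rellich--Kondrachov embedding, its resolvent is compact. This yields the discrete spectrum $0<\lambda_1<\lambda_2\leq\cdots\to\infty$, an $L^2_{\phi}$-orthogonal basis $\{f_k\}$ of eigenfunctions, and the stated variational (min--max) characterization of $\lambda_k$. Elliptic regularity makes each $f_k$ smooth in $\Omega$, so its nodal set is a closed set of measure zero and its nodal domains are well-defined open sets. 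Finally, since $\Delta_{\phi}$ is a second-order uniformly elliptic operator with smooth (hence Lipschitz) lower-order coefficients, Aronszajn's unique continuation theorem applies: a solution of $\Delta_{\phi}u+\lambda u=0$ vanishing on a nonempty open subset of the connected set $\Omega$ vanishes identically.

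The heart of the argument is the Courant counting step. Suppose, for contradiction, that $f_k$ has at least $k+1$ nodal domains $D_1,\dots,D_{k+1}$, and set $u_j:=f_k\,\mathbf{1}_{D_j}$. Each $u_j$ lies in $W_0^{1,2}(\Omega)$, being $f_k$ on $D_j$ and extended by $0$ across $\partial D_j$ where $f_k$ vanishes, and the weighted Green identity gives $R_{\phi}(u_j)=\lambda_k$, because $f_k$ solves the eigenvalue equation on $D_j$ with zero Dirichlet data. I would then choose a nontrivial combination $u=\sum_{j=1}^{k}c_j u_j$ satisfying the $k-1$ orthogonality conditions $\int_{\Omega}u f_i\,d\mu=0$ for $i=1,\dots,k-1$; this is a homogeneous linear system with $k$ unknowns and $k-1$ equations, so a nonzero solution exists. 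On one hand the min--max characterization forces $R_{\phi}(u)\geq\lambda_k$; on the other, since the supports $D_j$ are disjoint, both the numerator and the denominator of $R_{\phi}(u)$ split as sums over $j$ in which each summand contributes the common ratio $\lambda_k$, so $R_{\phi}(u)=\lambda_k$ exactly. Thus $u$ attains the infimum, hence is itself an eigenfunction of $\Delta_{\phi}$ on $\Omega$ with eigenvalue $\lambda_k$. But $u\equiv 0$ on the nonempty open set $D_{k+1}$, contradicting unique continuation. Therefore $f_k$ has at most $k$ nodal domains.

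For the closed problem the only change is bookkeeping: the spectrum starts at $\lambda_0=0$ with constant eigenfunction, so an eigenfunction of the $k$-th eigenvalue carries $k$ orthogonality constraints against $f_0,\dots,f_{k-1}$; assuming $k+2$ nodal domains produces $k+1$ building blocks $u_j$ and the identical min--max/unique-continuation contradiction, giving the bound $k+1$. The main obstacle I anticipate is not the combinatorics but the two analytic pillars---verifying that the weighted Green identity really yields $R_{\phi}(u_j)=\lambda_k$ on each nodal piece, which hinges on the vanishing of the boundary term on $\partial D_j$, and invoking unique continuation for $\Delta_{\phi}$. Both reduce to the divergence-form structure and the smoothness of $\phi$, so the drift term causes no genuine difficulty beyond this careful verification.
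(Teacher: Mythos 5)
The paper does not actually prove this statement: Theorem \ref{thm: courant} is quoted verbatim from Chen--Mao--Wu \cite{chen2025eigenfunctions} and used as a black box, so there is no in-paper proof to compare against. Your argument is the standard Courant machine transplanted to $\Delta_{\phi}$, and it is essentially sound: on a bounded $\Omega$ with smooth $\phi$ the density $e^{-\phi}$ is uniformly comparable to $1$, so the weighted and unweighted Sobolev spaces and Dirichlet forms are equivalent, Rellich--Kondrachov gives compact resolvent and the discrete spectrum with complete $L^2_{\phi}$-orthogonal eigenbasis, and the counting step (build $u=\sum_{j=1}^{k}c_j f_k\mathbf{1}_{D_j}$ orthogonal to $f_1,\dots,f_{k-1}$, observe $R_{\phi}(u)=\lambda_k$ so $u$ lies in the $\lambda_k$-eigenspace, then contradict Aronszajn unique continuation since $u$ vanishes on the open set $D_{k+1}$) is exactly the classical proof; the divergence-form rewriting $\Delta_{\phi}u=e^{\phi}\operatorname{div}(e^{-\phi}\nabla u)$ is what makes all of this go through unchanged. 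Two small points you should still supply: (i) the statement asserts the \emph{strict} inequality $\lambda_1(\Omega)<\lambda_2(\Omega)$, i.e.\ simplicity of the first Dirichlet eigenvalue, which your argument never addresses --- you need the usual step that $|f_1|$ is also a minimizer, hence an eigenfunction that is positive in $\Omega$ by the strong maximum principle, and that two nowhere-vanishing functions cannot be $L^2_{\phi}$-orthogonal; (ii) the membership $f_k\mathbf{1}_{D_j}\in W_0^{1,2}(\Omega)$ deserves its usual justification (approximation using the sets $\{|f_k|>\varepsilon\}$), though you do flag it. With those additions your proof is a complete and self-contained substitute for the citation.
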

In particular when $n=2$, they show that the nodal sets of the weighted Laplacian share the same properties as those of the standard Laplacian, see [Theorem 2.5, \cite{cheng1976eigenfunctions}] for the standard Laplacian.
\begin{thm} [Theorem 1.4, \cite{chen2025eigenfunctions}]\label{thm: Props. of nodal lines}
    Assume that $M^2$ is a smooth Riemannian 2-manifold without boundary (not necessarily compact). If a smooth function $f\in C^{\infty} (M)$ is an eigenfunction of the weighted Laplacian $\Delta_{\phi}$, then except for a few isolated points, the set of nodes of $f$ forms a smooth curve. Moreover, one has:\\
    $\bullet$ When the nodal lines meet, they form an equiangular system.\\
    $\bullet$ The nodal lines consist of a number of $C^2$-immersed closed smooth curves.
\end{thm}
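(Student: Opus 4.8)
The plan is to follow Cheng's local analysis of nodal sets \cite{cheng1976eigenfunctions}, exploiting the fact that the only feature of the Laplacian used there is the structure of its principal symbol, which the weighted operator $\Delta_{\phi}$ shares since the drift $-\langle\nabla\phi,\nabla\cdot\rangle$ is a lower-order term. First I would reduce to a local problem around a point $p$ of the nodal set $N(f)=f^{-1}(0)$. Because $M^2$ is a surface, I can choose isothermal coordinates $(x,y)$ centered at $p$ in which $g=\rho\,(dx^2+dy^2)$ with $\rho>0$ smooth and $\rho(p)=1$. In these coordinates the eigenvalue equation $\Delta_{\phi}f+\lambda f=0$ becomes
\begin{equation*}
f_{xx}+f_{yy}+b_1 f_x+b_2 f_y+c\,f=0,
\end{equation*}
a second-order uniformly elliptic equation whose principal part is the flat Laplacian and whose coefficients $b_1,b_2,c$ are smooth (the first-order terms absorbing the drift, and $c=\lambda\rho$).

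The key structural input is the local behavior of solutions of such equations, due to Bers and Hartman--Wintner. Provided $f\not\equiv 0$, unique continuation guarantees that $f$ vanishes at $p$ to some finite order $k\geq 1$, with leading Taylor term a nonzero homogeneous \emph{harmonic} polynomial: in polar coordinates $(r,\theta)$,
\begin{equation*}
f=r^k\big(a\cos k\theta+b\sin k\theta\big)+O(r^{k+1}),\qquad (a,b)\neq(0,0),
\end{equation*}
with the analogous expansion for $\nabla f$ obtained by differentiation. I would then split $N(f)$ into regular points, where $k=1$ and $\nabla f(p)\neq 0$, and singular points, where $k\geq 2$ and $\nabla f(p)=0$. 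At a regular point the implicit function theorem applied to $f$ (whose leading part $ax+by$ has nonvanishing gradient) shows $N(f)$ is locally a smooth embedded arc. At a singular point the zero set of the leading polynomial $r^k\cos(k\theta-\theta_0)$ consists of exactly $2k$ rays meeting at equal angles $\pi/k$, and the perturbation analysis of the full equation shows the true nodal set consists of $2k$ arcs tangent to these rays; this is precisely the equiangular-system statement at crossings.

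The remaining points are global. Isolation of the singular set follows because near any singular point the $2k$-ray model forbids accumulation of further common zeros of $f$ and $\nabla f$, so singular points are isolated, hence locally finite (the ``few isolated points''). Off the singular set $N(f)$ is a one-dimensional $C^{\infty}$ submanifold; sewing the smooth arcs together through the equiangular crossings and using that a maximal nodal line cannot terminate on a boundaryless $M$ yields that $N(f)$ is a union of $C^2$-immersed curves without endpoints. I expect the main obstacle to be the regularity at the singular crossings: upgrading the leading-order harmonic model to genuine $C^2$ arcs of the actual nodal set requires the careful Hartman--Wintner expansion of $f$ and its derivatives rather than merely the existence of a leading harmonic term, and it is this perturbative step, not the algebra of harmonic polynomials, that limits the claimed smoothness to $C^2$.
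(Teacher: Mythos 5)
The paper does not prove this statement itself: it is quoted verbatim as Theorem 1.4 of Chen--Mao--Wu \cite{chen2025eigenfunctions}, and the paper's only commentary is that the nodal sets of $\Delta_{\phi}$ share the properties of those of the standard Laplacian as in Cheng \cite{cheng1976eigenfunctions}. Your sketch follows exactly that intended route --- isothermal coordinates reduce $\Delta_{\phi}f+\lambda f=0$ to a flat-Laplacian principal part with smooth lower-order terms, so Cheng's Bers/Hartman--Wintner local analysis applies unchanged --- and it is correct.
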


Now suppose that $\Sigma^n$ is a self-shrinker hypersurface in $\mathbb{R}^{n+1}$ and let $\phi =-\frac{\vert x\vert^2}{4}$ the Gaussian density, and in particular, $\Delta_{\phi}=\mathcal{L}$ the drift Laplacian. If $\Sigma$ is non-compact then restrict the domain of the drift Laplacian $\mathcal{L}$ to the $L_{\phi}^2(\Sigma)$-space with respect to the $\phi$-weighted area element $d\mu$, as this ensures a well-defined spectrum.
 Although $\Sigma$ is non-compact, we assume that the Gaussian area of $\Sigma$ is finite, and in particular, $\Sigma$ has a polynomial volume growth which ensures the discreteness of the spectrum. Then we consider the closed eigenvalue problem of $\mathcal{L}$ as
 $$
 \mathcal{L}u+\lambda u=0
$$
where $u\in L_{\phi}^2(\Sigma)$. Assuming then $\Sigma$ is connected, the nodal set of $f_1$ is a co-dimension one submanifold, dividing $\Sigma$ into two regions, which we use to bound $\lambda_1$ via domain decomposition.
\section{Two-piece property of self-shrinkers in $\mathbb{R}^n$} 
Note that for $\phi \in L_{\mu}^2(\Sigma)$, if $\mathcal{L} \phi +\lambda_1 \phi =0$ then $\phi$ has exactly two nodal domains, but an eigenfunction with exactly two nodal domains is not necessarily the first eigenfunction. If $\frac{1}{2}$ is the first eigenvalue of $\mathcal{L}$ on $\Sigma$ then we can apply the Courant's nodal theorem to a linear function $a_1x_1+a_2x_2+a_3x_3$ on $\Sigma$, that is, any plane passing through the origin will cut $\Sigma$ into two connected pieces. This is an analogy to two-piece property proved by Ros, \cite{ros1995two}, for compact embedded minimal surfaces in $\mathbb{S}^3$, and later, generalized to compact embedded minimal hypersurfaces in $\mathbb{S}^n$ by Choe and Soret, \cite{choe2009first}. In \cite{brendle2016embedded}, Brendle gives the parallel property for embedded self-shrinkers in $\mathbb{R}^3$. In this section, we state the work of Brendle's with a sketch of the proof for the convenience of the reader.
\medskip

Given a unit vector $v\in \mathbb{S}^2$ , define the hyperplane $\Pi_v=\{ p\in \mathbb{R}^{3};\ \langle v,p\rangle =0\}$, and half-spaces $H_v^+=\{ p\in \mathbb{R}^3;\ \langle v,p\rangle >0\}$ and $H_v^-=\{ p\in \mathbb{R}^3;\ \langle v,p\rangle <0\}$.
\begin{thm} \label{thm: 2-piece prop}
    Any hyperplane $\Pi_v$ passing through the origin in $\mathbb{R}^{3}$ divides a complete, embedded, self-shrinker surface $\Sigma$ of $\mathbb{R}^{3}$, except $\Pi_v$ itself, into two connected pieces.
\end{thm}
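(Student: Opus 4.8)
The plan is to establish the two facts that together yield \emph{exactly} two pieces: that $\Sigma$ meets each open half-space $H_v^{\pm}$, and that $\Sigma\cap H_v^{+}$ (hence, by the symmetry $v\mapsto -v$, also $\Sigma\cap H_v^{-}$) is connected. The starting point is the observation from Section 2 that the linear function $u=\langle v,x\rangle$ restricted to $\Sigma$ satisfies $\mathcal{L}u=-\tfrac12 u$, so its zero set is exactly $\Sigma\cap\Pi_v$ and its nodal domains are precisely the components of $\Sigma\setminus\Pi_v$. For the first fact I would run a maximum-principle argument: if $\Sigma$ were contained in the closed half-space $\overline{H_v^{+}}=\{u\ge 0\}$, then $u$ is a nonnegative supersolution of $\mathcal{L}u=-\tfrac12 u\le 0$, and if it attained the value $0$ at an interior point the strong maximum principle (applicable on $\Sigma$ because the Gaussian weight is confining and $\Sigma$ has polynomial volume growth) would force $u\equiv 0$, i.e. $\Sigma\subseteq\Pi_v$, which is excluded. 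Thus $\Sigma$ genuinely meets both open half-spaces, and since $\Sigma$ is connected every component of $\Sigma\cap H_v^{+}$ has nonempty boundary on $\Pi_v$.

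The connectivity is the substantive part, and here I would exploit the conformal picture from the introduction: $\Sigma$ is minimal for the Gaussian metric $g=e^{-|x|^2/4}\langle\cdot,\cdot\rangle$, equivalently $f$-minimal for the weighted Euclidean space with $f=|x|^2/4$, whose Bakry--\'Emery Ricci tensor is $\Ric_f=\Ric+\nabla^2 f=\tfrac12\,g>0$. Two features are decisive: the reflection $R_v$ across $\Pi_v$ preserves $|x|$ and the Euclidean metric, so it is an isometry of this weighted geometry and carries self-shrinkers to self-shrinkers; and its fixed-point set $\Pi_v$ is therefore totally geodesic. The strategy is a Frankel-type argument: positivity of $\Ric_f$ forbids two disjoint complete properly embedded self-shrinkers, since a minimizing geodesic between them could be shortened using $\Ric_f>0$ together with $f$-minimality. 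Across the totally geodesic mirror $\Pi_v$ one would like to upgrade this to a statement about $f$-minimal surfaces-with-boundary lying on $\Pi_v$.

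Concretely, assuming $\Sigma\cap H_v^{+}$ has two distinct components $\Omega_1,\Omega_2$, both reaching $\Pi_v$ by the first paragraph, one is tempted to double each across the mirror, forming $\widehat\Omega_i=\Omega_i\cup\partial\Omega_i\cup R_v(\Omega_i)$, and conclude from the Frankel mechanism that $\widehat\Omega_1$ and $\widehat\Omega_2$ must meet, contradicting that $\Omega_1,\Omega_2$ are disjoint components of the embedded $\Sigma$. The main obstacle---and the technical heart of Brendle's proof---is exactly that $\Sigma$ does not meet $\Pi_v$ orthogonally in general, so the doubled surfaces $\widehat\Omega_i$ acquire a crease along $\partial\Omega_i$ and are not smooth $f$-minimal hypersurfaces; the naive reflection principle fails. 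The way around it is to avoid doubling and instead apply the strong maximum principle for the quasilinear elliptic self-shrinker equation directly to $\Sigma$ and $R_v(\Sigma)$: one argues that a second component on the same side of $\Pi_v$ forces an interior point where $\Sigma$ and $R_v(\Sigma)$ are tangent with one lying locally on one side of the other, whence unique continuation makes them coincide and collapses the extra component. Producing this first-contact configuration---controlling the pieces near $\Pi_v$, where the contact set lives, and using embeddedness of $\Sigma$ together with the confining Gaussian weight---is the step I expect to absorb essentially all of the work.
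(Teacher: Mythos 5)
Your proposal diverges from the paper's proof at the decisive step, and the replacement you offer for the doubling trick does not close the argument. You correctly identify that reflecting a component $\Omega_i$ of $\Sigma\cap H_v^{+}$ across $\Pi_v$ produces a creased, non-smooth surface, so the Frankel mechanism for $f$-minimal hypersurfaces with $\Ric_f=\tfrac12 g>0$ cannot be applied to the doubles. But the fix you then sketch --- apply the strong maximum principle to $\Sigma$ and $R_v(\Sigma)$ at a first-contact point and conclude by unique continuation that they coincide --- is both unsubstantiated and inconclusive. No mechanism is given for why the existence of a second component of $\Sigma\cap H_v^{+}$ should force $\Sigma$ and $R_v(\Sigma)$ to become tangent with one locally on one side of the other; and even granting the conclusion $\Sigma=R_v(\Sigma)$, a reflection-symmetric embedded self-shrinker could perfectly well meet $H_v^{+}$ in several components, so nothing is ``collapsed.'' The extra component is simply not in contradiction with reflection symmetry, and this is exactly where the substantive content of the theorem lives.

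The paper (following Brendle) proceeds quite differently: assuming $\Sigma\cap H_v^{+}$ has a component $\Sigma_1$ with $\Sigma_2=(\Sigma\cap H_v^{+})\setminus\Sigma_1\neq\emptyset$, it solves a Plateau-type problem with boundary $\partial\Sigma_1$ in each component $U_1,U_2$ of $\mathbb{R}^3\setminus\Sigma$, using the truncated functionals $\mathcal{F}_k$ with cut-offs $\psi_k$ to handle the unbounded region and Meeks--Yau to get smooth embedded minimizers in the mean convex domains. The stability inequality applied to functions of the form $\langle a,x\rangle$ forces $\int|A|^2 e^{-|x|^2/4}\to 0$, so after Schoen--Simon curvature estimates the limits $\tilde\Sigma_1,\tilde\Sigma_2$ are totally geodesic shrinkers, hence pieces of $\Pi_v$ disjoint from $\Sigma$ by the strict maximum principle; since $\tilde\Sigma_1\cup\tilde\Sigma_2\cup\partial\Sigma_1$ is open and closed in $\Pi_v$ it equals all of $\Pi_v$, contradicting $\emptyset\neq\partial\Sigma_2\subset\Sigma\cap\Pi_v$. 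Your first paragraph (that $\Sigma$ meets both open half-spaces and each component of $\Sigma\cap H_v^{\pm}$ has boundary on $\Pi_v$) is fine, but it is the easy part; the minimization-and-stability construction, or some genuine substitute for it, is what your proposal is missing.
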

\begin{proof}
The proof is trivial for self-shrinkers $\Pi_u$, where $u (\neq v) \in \mathbb{S}^2$, hence we assume that $\Sigma$ is not a hyperplane passing through the origin. Suppose that $\Pi_v$ divides $\Sigma$ into more than two connected components, and assume that $\Sigma \cap H_v^+$ is not connected. Let $\Sigma_1$ be a connected components of $\Sigma \cap H_v^+$ and let $\Sigma_2=(\Sigma \cap H_v^+)\setminus \Sigma_1$. Obviously $\Sigma_2 \neq \emptyset$. Denote the connected components of $\mathbb{R}^{3}\setminus \Sigma$ by $U_1$ and $U_2$, and assume that $U_1$ is unbounded. For $k$ sufficiently large, denote by $\mathcal{E}_k$ the set of all embedded disks $S\subset \bar{U}_1\cap \{ \vert x\vert \leq 2k\}$ with the property that $\partial S=\partial \Sigma_1$. Choose a cut-off function $\psi_k:[0,\infty)\to [0,1]$ satisfying $\psi_k=0$ on $[0,k]$ and $\psi_k'(2k)>k$. Consider the functional
$$
\mathcal{F}_k(S)=\int_S e^{-\frac{\vert x\vert^2}{4}+\psi_k(\vert x\vert)} dx
$$
for $S\in \mathcal{E}_k$. $\mathcal{F}_k$ is the area functional for the conformal metric $e^{-\frac{\vert x\vert^2}{4}+\psi_k(\vert x\vert)}\delta_{ij}$. For $k$ sufficiently large, the region $\bar{U}_1\cap \{ \vert x\vert \leq 2k\}$ is a mean convex domain with respect to this conformal metric. Then by \cite{meeks1982existence}, there exists a smooth embedded surface $\Sigma_k\in \mathcal{E}_k$ which minimizes the functional $\mathcal{F}_k$. Then it follows that
$$
\sup_k \mathcal{F}_k(\Sigma_k)<\infty
$$
Then
$$
\sup_k \int_{\Sigma_k} e^{-\frac{\vert x\vert^2}{4}} dx<\infty
$$
Using the first variation formula, we have $H=\frac{1}{2}\langle x,\nu \rangle$ on $\Sigma_k\cap \{ \vert x\vert \leq k\}$. Finally, the stability inequality for the minimizer $\Sigma_k$ implies that
$$
0\leq -\int_{\Sigma_k} e^{-\frac{\vert x\vert^2}{4}}fLf dx
$$
for $f\in C_c^{\infty}(\bar{\Sigma}_k),\ f\vert_{\partial \Sigma_1}\equiv 0$. Employing an argument similar to  [Proposition 5, \cite{brendle2016embedded}], we get a uniform upper bound to $\vert A\vert^2$,
\begin{equation}\label{eqn: lim sup weigh. A 0}
\begin{split}
\lim_{k\to \infty}\sup \int_{\Sigma_k\cap \{ \vert x\vert \leq \sqrt{k}\}}\vert A\vert^2  e^{-\frac{\vert x\vert^2}{4}} dx =0
\end{split}
\end{equation}
By [Theorem 3, \cite{schoen1981regularity}] then
$$
\lim_{k\to \infty}\sup_{\Sigma_k \cap W}\vert A\vert^2 <\infty
$$
for every compact $W\subset \mathbb{R}^{3}\setminus \partial \Sigma_1$. The surfaces $\Sigma_k$ converge in $C_{loc}^{\infty}(\mathbb{R}^{3}\setminus \partial \Sigma_1)$ to a smooth surface $\tilde{\Sigma}_1 \subset \mathbb{R}^{3}\setminus \partial \Sigma_1$ which satisfies the shrinker equation $H=\frac{1}{2}\langle x,\nu\rangle$. Then by (\ref{eqn: lim sup weigh. A 0}), we conclude that $\tilde{\Sigma}_1$ is totally geodesic. \\
 If $U_2$ is not unbounded, then consider the set $\mathcal{C}$ of all embedded disks $S\subset \bar{U}_2$ with boundary $\partial S=\partial \Sigma_1$, and note that $\mathcal{C}\neq \emptyset$. Consider the area functional for the conformal metric $e^{-\frac{\vert x\vert^2}{4}}\delta_{ij}$,
$$
\mathcal{F}= \int_{\Sigma}e^{-\frac{\vert x\vert^2}{4}} dx
$$
Since $\bar{U}_2$ is a mean convex domain with respect to this metric, by \cite{meeks1982existence} there exists a smooth embedded surface $\tilde{\Sigma}_2\in \mathcal{C}$ which minimizes the functional $\mathcal{F}$. Then by the first variation formula, $\tilde{\Sigma}_2$ satisfies $H=\frac{1}{2}\langle x,\nu\rangle$. And, by the stability inequality,
$$
0\leq -\int_{\tilde{\Sigma}_2}e^{-\frac{\vert x\vert^2}{4}}fLf dx
$$
for $f\in C^{\infty}_0(\Sigma_1)$, then one can find that,
$$
\int_{\tilde{\Sigma}_2}\vert A\vert^2 e^{-\frac{\vert x\vert^2}{4}}\langle a,x\rangle^2 dx=0
$$
Then, $\tilde{\Sigma}_2$ is also totally geodesic, and thus $\langle x,\nu \rangle =0$ on $\tilde{\Sigma}_2$. Note that, since $\Sigma$ is not totally geodesic the strict maximum principle then implies that neither $\tilde{\Sigma}_1$ nor $\tilde{\Sigma}_2$ can touch $\Sigma$. Clearly, $\tilde{\Sigma}_1$ and $\tilde{\Sigma}_2$ are contained in the plane $\Pi_v$. By $\tilde{\Sigma}_1\subset U_1$ and $\tilde{\Sigma}_2\subset U_2$ then we conclude that $\tilde{\Sigma}_1$ and $\tilde{\Sigma}_2$ are disjoint. Note that they share the same boundary $\partial \Sigma_1$. Hence, $(\tilde{\Sigma}_1 \cup \tilde{\Sigma}_2 \cup \partial \Sigma_1) \subset \Pi_v$, is open and closed, and thus 
$$
(\tilde{\Sigma}_1 \cup \tilde{\Sigma}_2 \cup \partial \Sigma_1) = \Pi_v
$$
If $U_2$ is unbounded, the same argument as covered for $U_1$ applies to obtain another minimizing surface $\tilde{\Sigma}_2 \subset U_2$ which is totally geodesic, and it shares the same boundary $\partial \Sigma_1$. The same argument still holds, and $(\tilde{\Sigma}_1 \cup \tilde{\Sigma}_2 \cup \partial \Sigma_1) = \Pi_v$.
But this contradicts our original assumption because $\Sigma \cap \Pi_v =\partial \Sigma_1 \cup \partial \Sigma_2$ and $\partial \Sigma_2\neq \emptyset$. Therefore, $\Sigma \cap \Pi_v^+$ is connected. Similarly $\Sigma \cap \Pi_v^-$ is connected as well. 

\end{proof}
\section{self-shrinkers in $\mathbb{R}^3$}
In this section, we survey known complete self-shrinkers in $\mathbb{R}^3$, emphasizing their symmetries to provide context for our spectral analysis of the drift Laplacian in subsequent sections. The simplest complete embedded self-shrinkers are flat planes through the origin, invariant under $O(3)$, the sphere of radius 2 centered at the origin, also $O(3)$-invariant, cylinders of radius $\sqrt{2}$, invariant under $\mathbb{S}^1\times \mathbb{R}$ (rotations around the axis and translations). Brendle \cite{brendle2016embedded} proved that these are the only complete embedded self-shrinkers of genus zero. However, Drugan \cite{drugan2015immersed} constructed a rotationally symmetric, embedded self-shrinker of genus zero, topologically a sphere, which is rotationally symmetric and non-compact with two conical ends. Angenent \cite{angenent1992shrinking} introduced a compact, rotationally symmetric self-shrinker of genus one, an embedded torus with $\mathbb{S}^1\times \mathbb{S}^1$-symmetry.  
\medskip

Ilmanen \cite{ilmanen1998lectures} conjectured the existence of complete, non-compact, embedded self-shrinkers with arbitrary genus, one conical end, and dihedral symmetry. These surfaces resemble a plane and sphere desingularized along their intersection, forming handles that increase the genus. Kapouleas, Kleene, and Moller \cite{kapouleas2018mean} confirmed this conjecture for sufficiently large genus $g\geq g_0$. They desingularize the intersection of a plane and a sphere through the great circle of a sphere using Scherk-type surfaces, producing embedded self-shrinkers with genus $g$, one end asymptotic to a cone over a $\frac{2\pi}{2g+2}$-periodic curve in $\mathbb{S}^2$. These surfaces are invariant under the dihedral group $\mathbb{D}_{2g+2}$, of order
$4g+4$, reflecting rotations by $\frac{\pi}{g+1}$ and reflections across planes through the cone’s axis. Their method, however, does not yield low-genus examples.
\medskip

Concurrently, X. Nguyen \cite{nguyen2014construction} developed gluing techniques to approximate self-shrinkers by desingularizing two intersecting planes. While foundational, these constructions do not fully specify the resulting symmetries. Recently, Buzano, Nguyen, and Schulz \cite{buzano2025noncompact} and Ketover \cite{ketover2024self} employed min-max methods in Gaussian-weighted $\mathbb{R}^3$ to construct embedded self-shrinkers of any genus $g\geq 1$. Buzano, Nguyen, and Schulz \cite{buzano2025noncompact} produce surfaces, likely with one conical end, invariant under the dihedral group $D_{g+1}$ of order $2(g+1)$, corresponding to rotations by $\frac{2\pi}{g+1}$ and reflections through planes containing a central axis. Ketover \cite{ketover2024self} constructs non-compact self-shrinkers, likely one-ended for large $g$, invariant under the prismatic group $\mathbb{D}_{g+1}\times \mathbb{Z}_2$ of order $4(g+1)$. The $\mathbb{Z}_2$-action corresponds to a reflection, such as $(x,y,z)\rightarrow (x,y,-z)$, enhancing the dihedral symmetry with a mid-plane reflection. Ilmanen and White \cite{ilmanen2024fattening} prove the existence of these symmetric types of self-shrinkers which emerge as the tangent flow to the mean curvature flow initiated with a compact, connected, smoothly embedded surface of the genus $g+1$ in $\mathbb{R}^3$. As $g$ tends to infinity, these self-shrinkers converge to multiplicity 2 planes.
These self-shrinkers, particularly those with dihedral or prismatic symmetries, form the focus of our eigenvalue analysis. Their finite Gaussian area ensures a discrete spectrum for the drift Laplacian, and their symmetries constrain the eigenfunctions, facilitating our computation of the first eigenvalue in Section 6.

\section{First eigenvalue of $\mathcal{L}$}
Given $2\leq n\in \mathbb{N}$, the dihedral group $\mathbb{D}_n$ of order $2n$ is defined to be the discrete subgroup of Euclidean isometries acting on $\mathbb{R}^3$ generated by rotations $\rho_k:\mathbb{R}^3\rightarrow \mathbb{R}^3$ of angle $\pi$ around $n$ horizontal axes
$$
h_k:=\{ (r\cos \frac{k\pi}{n},r\sin \frac{k\pi}{n},0);\ r\in \mathbb{R}^3 \}
$$
for $k\in \{ 1,\cdots ,n\}$. Moreover, $\mathbb{D}_n$ contains the composition $\rho_2 \circ \rho_1$ that acts as a rotation by angle $\frac{2\pi}{n}$ around the vertical axis
$$
v:=\{ (0,0,r);\ r\in \mathbb{R}\}
$$
Let fix some notations:\\
Denote by $\Pi_{h_k}$ a plane containing the line $h_k$, $\Pi_H$ the horizontal plane $\{(x_1,x_2,0)\in \mathbb{R}^3 \}$, and $\Pi_{\langle v,h_k\rangle}$ the plane containing two intersecting lines $v$ and $h_k$. Note that, associated with any $\sigma \in \mathbb{D}_n$ there exists a plane (indeed many) that passes through the origin and is invariant only under the subgroup spanned by $\sigma$. Note that the horizontal plane is the unique plane that is invariant under the whole group $\mathbb{D}_n$.
\medskip

 Let $T$ be the domain in $\mathbb{R}^3$ enclosed by two planes $\Pi_{\langle v,h_k\rangle}$ and $\Pi_{\langle v,h_{k+1}\rangle}$. We say that $T$ is a fundamental domain of the group $\mathbb{D}_n$ in $\mathbb{R}^3$, in the sense that $T$ gives a cover of $\mathbb{R}^3$ by $n$ congruent domains under $\mathbb{D}_n$. Denote by $P:=\Sigma \cap T$ the fundamental patch of $\Sigma$, where $T$ is a fundamental domain of $\mathbb{R}^3$. Suppose that $\Sigma$ is invariant under $\mathbb{D}_n$, then the number of fundamental patches of $\Sigma$ is $n$.
 Let $H^+:=\{ p\in \mathbb{R}^3;\ \langle v,p\rangle >0\}$ and $H^-:=\{ p\in \mathbb{R}^3;\ \langle v,p\rangle <0\}$ to be the upper and lower half-space with respect to the vertical axis $v$. Set $T^+=T\cap H^+$ (resp. $P^+$) the domain in $H^+$ enclosed by two planes $\Pi_{\langle v,h_k\rangle}$ and $\Pi_{\langle v,h_{k+1}\rangle}$, then define $T^-$ (resp. $P^-$) accordingly. 

\begin{lem} \label{lem: lambda_1<1/2 implies f_1 invariant}
    Let $\mathbb{D}_n$ be the dihedral group of order $2n$. Suppose that $\Sigma$ is a complete, embedded, self-shrinker in $\mathbb{R}^3$ that is invariant under $\mathbb{D}_n$. If the first eigenvalue of the drift Laplacian $\mathcal{L}$ on $\Sigma$ is less than $\frac{1}{2}$, then the first eigenfunction must be invariant under $\mathbb{D}_n$.
\end{lem}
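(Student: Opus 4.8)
The plan is to exploit that $\mathcal{L}$ commutes with the $\mathbb{D}_n$-action and to rule out, one representation at a time, every nontrivial way the first eigenspace could transform, using the two-piece property together with the Courant-type bound on the number of nodal domains. First I would record the commutation. Each $\sigma\in\mathbb{D}_n$ is a Euclidean isometry fixing the origin, so it preserves $\Sigma$, the induced metric, and the weight $e^{-|x|^2/4}$; hence $U_\sigma f:=f\circ\sigma$ is an isometry of $L^2(\Sigma,d\mu)$ satisfying $U_\sigma\mathcal{L}=\mathcal{L}U_\sigma$. Consequently the first eigenspace $V_1:=\ker(\mathcal{L}+\lambda_1)$ is a finite-dimensional $\mathbb{D}_n$-invariant subspace, i.e. a real representation of $\mathbb{D}_n$, and the lemma is equivalent to showing that this representation is trivial, so that every first eigenfunction is fixed by all $U_\sigma$.

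Next, suppose for contradiction that $V_1$ carries a nontrivial subrepresentation. Since the involutions $\rho_k$ (the $\pi$-rotations about the horizontal axes $h_k$) generate $\mathbb{D}_n$, some $\rho_k$ must act nontrivially on this summand; as $\rho_k^2=\mathrm{id}$, its $(-1)$-eigenspace is nonzero, and choosing a vector there produces a nonzero first eigenfunction $g$ with $g\circ\rho_k=-g$. By Theorem \ref{thm: courant} (closed problem) $g$ has exactly two nodal domains $\Omega^{+}=\{g>0\}$ and $\Omega^{-}=\{g<0\}$, which $\rho_k$ interchanges, and by Theorem \ref{thm: Props. of nodal lines} the nodal set $N=\{g=0\}$ is a smooth curve separating $\Sigma$ into these two halves. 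Since $g>0$ solves $\mathcal{L}g+\lambda_1 g=0$ on $\Omega^{+}$ with $g|_{\partial\Omega^{+}}=0$, it is the first Dirichlet eigenfunction there, so $\lambda_1=\lambda_1^{\mathrm{Dir}}(\Omega^{+})$.

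The contradiction is then extracted by comparison with a coordinate function. For $\rho_k$ the $\pi$-rotation about $h_k$, the two coordinates perpendicular to $h_k$ are odd under $\rho_k$ while the one along $h_k$ is even; fix an odd coordinate $x_i$. It is an eigenfunction with eigenvalue $\tfrac12$, and by Theorem \ref{thm: 2-piece prop} its nodal set $\Sigma\cap\Pi_{e_i}$ divides $\Sigma$ into exactly the two connected, $\rho_k$-swapped pieces $\Sigma\cap H_{e_i}^{+}$ and $\Sigma\cap H_{e_i}^{-}$, on the first of which $x_i>0$; thus $x_i$ is a positive first Dirichlet eigenfunction of $\mathcal{L}$ on $\Sigma\cap H_{e_i}^{+}$ with eigenvalue $\tfrac12$. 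If the two separators coincide, so that $\Omega^{+}=\Sigma\cap H_{e_i}^{+}$, then $\lambda_1=\lambda_1^{\mathrm{Dir}}(\Omega^{+})=\tfrac12$, contradicting $\lambda_1<\tfrac12$; more generally, a nodal comparison forcing $\Omega^{+}\subseteq\Sigma\cap H_{e_i}^{+}$ already gives $\lambda_1\ge\tfrac12$ by Dirichlet domain monotonicity, the same contradiction. The remaining, higher-frequency irreducibles of $\mathbb{D}_n$ I expect to exclude directly by Theorem \ref{thm: courant}, since a vector transforming under a $2\pi j/n$-rotation with $j\ge2$ should possess more than two nodal domains. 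Hence $V_1$ is trivial and the first eigenfunction is $\mathbb{D}_n$-invariant.

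The main obstacle is precisely the nodal-set comparison in the last step: pinning down that the separating curve $N$ of a symmetry-broken first eigenfunction must coincide with, or lie inside the half-space bounded by, a coordinate hyperplane section, so that comparison with the corresponding linear eigenfunction forces $\lambda_1\ge\tfrac12$. This is where the two-piece property (Theorem \ref{thm: 2-piece prop}) and the planar structure of nodal lines (Theorem \ref{thm: Props. of nodal lines}) must be combined most carefully. A secondary technical point is controlling the multiplicity of $\lambda_1$, so that the reduction to a single irreducible summand, and hence to a function odd under one involution $\rho_k$, is fully justified.
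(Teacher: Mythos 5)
Your setup is sound and runs parallel to the paper's: averaging over the group (your representation-theoretic reduction to an eigenfunction $g$ with $g\circ\rho_k=-g$ is equivalent to the paper's construction $\psi=\phi-\phi\circ\sigma$), and the intended contradiction via comparison with a coordinate eigenfunction of eigenvalue $\tfrac12$ is exactly the paper's endgame. But there is a genuine gap at the step you yourself flag as ``the main obstacle'': you never show that the nodal set of the antisymmetric eigenfunction is a planar section, and without that neither of your two closing mechanisms (coincidence of separators, or the containment $\Omega^{+}\subseteq\Sigma\cap H_{e_i}^{+}$ plus Dirichlet domain monotonicity) has any support. A priori the nodal curve of $g$ is just some $\rho_k$-invariant curve through the fixed-point set, and there is no reason for it to lie to one side of a coordinate plane.

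The missing idea is the rotating-plane intermediate-value argument of Choe--Soret \cite{choe2009first}, which the paper adapts as follows. For the involution $\sigma=\rho_k$ one considers the one-parameter family $\Pi_\sigma$ of planes through the origin containing the fixed axis $h_\sigma$; each such plane is preserved by $\sigma$ with its two half-planes swapped, so $\psi$ restricted to $\Pi\cap\Sigma$ is odd and, unless identically zero there, has a constant sign on $\Pi^{+}\cap\Sigma$ (else $\psi$ would already have too many nodal domains). Rotating $\Pi$ by $\pi$ about $h_\sigma$ exchanges $\Pi^{+}$ and $\Pi^{-}$ and hence flips that sign, so by continuity there is a plane $\Pi^{0}$ with $\psi\equiv 0$ on $\Pi^{0}\cap\Sigma$. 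Only now do Theorem \ref{thm: 2-piece prop} and Theorem \ref{thm: courant} combine to force the nodal set of $\psi$ to be exactly $\Pi^{0}\cap\Sigma$, after which one takes the linear function $\xi$ vanishing on $\Pi^{0}$ and signed like $\psi$: since $\lambda_1<\tfrac12$ the eigenfunctions $\psi$ and $\xi$ are $L^2(d\mu)$-orthogonal, yet $\psi\xi>0$ off the common nodal set, a contradiction. (This orthogonality route also sidesteps your secondary worry: no containment of domains is needed once the nodal sets coincide.) Finally, your deferral of the ``higher-frequency'' irreducibles is unnecessary: since the involutions $\rho_k$ generate $\mathbb{D}_n$, every nontrivial irreducible summand of $V_1$ admits a $(-1)$-eigenvector for some $\rho_k$, so the odd-eigenfunction case you treat is already the general case.
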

\begin{proof}
    Let $\sigma \in \mathbb{D}_n$ and denote by $\Pi_{\sigma}$ the set of the planes that pass through the origin and are invariant under $\sigma$. For $\Pi \in \Pi_{\sigma}$, denote by $\Pi^+$ and $\Pi^-$ the two distinct half-planes of $\Pi$ which are split by the line $h_{\sigma}$ satisfying $\sigma (p)=p$ for each $p\in h_{\sigma}$.
    Suppose that $\phi$ is an eigenfunction on $\Sigma$ of $\mathcal{L}$ corresponding with the first eigenvalue $\lambda_1$. Then $\phi \circ \sigma$ is also an eigenfunction with eigenvalue $\lambda_1$. Consider
    $$
    \psi(x):=\phi (x) - \phi \circ \sigma (x)
    $$
    If $\psi$ is the null function, then $\phi$ is invariant under $\sigma$. If $\psi \not \equiv 0$, then $\psi$ itself is an eigenfunction with eigenvalue $\lambda_1$. Furthermore, for any $p\in \Pi^+ \cap \Sigma$, $\psi (p)=\phi (p)-\phi (q)$, where $\sigma (p)=q \in \Pi^-$, and thus $\psi (q)=\phi (q)-\phi(p)=-\psi(p)$ for $q\in \Pi^-\cap \Sigma$ (here we use the fact that $\Sigma$ is invariant under $\mathbb{D}_n$). It follows that $\psi (\Pi^+\cap \Sigma)=-\psi(\Pi^-\cap \Sigma)$ and $\psi (h_{\sigma}\cap \Sigma)=0$. We claim that $\psi$, if  not identically zero in $\Pi^+ \cap \Sigma$ then it has a constant sign, either positive or negative, in $\Pi^+ \cap \Sigma$. That is because $\psi$ is continuous and $\psi \vert_{\Pi \cap \Sigma}$ is zero only in $h_{\sigma}\cap \Pi \cap \Sigma$. On the contrary if $\psi$ be zero elsewhere in $\Pi \cap \Sigma$, then $\phi$ is not one-to-one, and thus it takes the same value in two distinct domains of $\Sigma$. Then since it must change sign on $\Sigma$, $\phi$ has more than two nodal domains, a contradiction. Since the set $\Pi_{\sigma}$ covers $\mathbb{R}^3$, and also $\psi (\not \equiv 0)$ is continuous and not positive on $\Sigma$, there must be a plane $\Pi^0\in \Pi_{\sigma}$ such that $\psi({\Pi^0}^+ \cap \Sigma)=\psi({\Pi^0}^- \cap \Sigma)=0$. This implies that, $\psi$'s nodal set contains $\Pi^0 \cap \Sigma$.
    By Theorem \ref{thm: 2-piece prop}, $\Pi^0$ divides $\Sigma$ into two connected pieces, and by Courant's nodal domain theorem \ref{thm: courant}, $\psi$ has precisely two nodal domains. Therefore, $\psi$ vanishes only on $\Pi^0 \cap \Sigma$. Let $D_1,\ D_2$ be the components of $\Sigma \backslash \Pi^0$ such that $\psi$ is positive on $D_1$ and negative on $D_2$. We can find a linear function on $\mathbb{R}^3$, $\xi = a_1x_1+a_2x_2+a_3x_3$ that vanishes on $\Pi^0$ and is positive on $D_1$. Recall that, $x_i$ is an eigenfunction on $\Sigma$ of $\mathcal{L}$ with eigenvalue $\frac{1}{2}$. Since $\lambda_1 < \frac{1}{2}$, $\xi$ is orthogonal to $\xi$ on $\Sigma$. Since $\psi$ and $\xi$ have the same sign on $D_1 \cup D_2$, we have $\int_{\Sigma}\psi \xi >0$, which contradicts the orthogonality of $\psi$ and $\xi$. Therefore, $\psi$ must vanish on $\Sigma$. Since $\sigma$ is an arbitrary element of $\mathbb{D}_n$, the proof is complete.
\end{proof}
\begin{thm}
    Suppose that $\Sigma$ is a complete, embedded self-shrinker in $\mathbb{R}^3$ which is invariant under the group $\mathbb{D}_n$. Then the first eigenvalue of $\mathcal{L}$ on $\Sigma$ equals $\frac{1}{2}$.
\end{thm}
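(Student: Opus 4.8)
The plan is to argue by contradiction, using the preceding Lemma as the crucial reduction. Since the coordinate functions satisfy $\mathcal{L}x_i=-\tfrac12 x_i$, we already know $0<\lambda_1\le\tfrac12$, so it suffices to exclude $\lambda_1<\tfrac12$. Assuming $\lambda_1<\tfrac12$, Lemma \ref{lem: lambda_1<1/2 implies f_1 invariant} forces the first eigenfunction $\phi$ to be $\mathbb{D}_n$-invariant, and by Theorem \ref{thm: courant} $\phi$ has exactly two nodal domains $D_1,D_2$; since $\phi\circ g=\phi$ preserves the sign of $\phi$, each $D_i$ is itself $\mathbb{D}_n$-invariant. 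The goal is to show that no $\mathbb{D}_n$-invariant eigenfunction can have exactly two nodal domains, which contradicts this picture and yields $\lambda_1=\tfrac12$.

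First I would extract the constraints imposed by invariance. Each $\pi$-rotation $\rho_k$ about a horizontal axis $h_k$ interchanges the half-spaces $H^+$ and $H^-$; as $\rho_k(D_i)=D_i$, each nodal domain meets both $H^+$ and $H^-$. I claim the nodal set $N$ must then meet the horizontal slice $\gamma:=\Sigma\cap\Pi_H$. Indeed, if $N\cap\Pi_H=\emptyset$ then $\gamma$ lies in one domain, say $D_1$; since $\Sigma^\pm:=\Sigma\cap H^\pm$ are each connected by the two-piece property (Theorem \ref{thm: 2-piece prop}) and $N$ avoids $\Pi_H$, the domain $D_2$ would be trapped inside a single half-space, contradicting the previous sentence. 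Hence $\phi|_\gamma$ changes sign. Because $\rho_k$ preserves $\Pi_H$ and acts on the invariant curve $\gamma$ as the reflection across $h_k$, while $R=\rho_2\circ\rho_1$ acts as the rotation by $2\pi/n$, the restriction $\phi|_\gamma$ is invariant under the induced planar $\mathbb{D}_n$-action; having period $2\pi/n$ and changing sign, it must change sign at least $2n$ times around $\gamma$, so $\gamma$ is cut into $\ge 2n$ arcs on which $\operatorname{sgn}\phi$ alternates.

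The main obstacle is to convert this $2n$-fold alternation into a genuine contradiction with connectedness of $D_1,D_2$. The $n$ positive boundary arcs all lie in $D_1$ and the $n$ negative arcs in $D_2$; tracking $N$ upward from $\gamma$ into the connected piece $\Sigma^+$ and mirroring by $\rho_k$ into $\Sigma^-$, I would show that keeping both $D_1$ and $D_2$ connected is incompatible with this interleaved pattern --- exactly as two families of $n$ alternating boundary arcs on a disk cannot both be joined in the interior without crossing. Making this precise for a $\Sigma^+$ of arbitrary topology is the delicate point; I expect to resolve it by passing to the quotient orbifold $\Sigma/\mathbb{D}_n\cong\Sigma^+/\langle R\rangle$, where $\bar\phi$ still has two nodal domains and the fundamental piece becomes simply connected, reducing matters to the planar interleaving obstruction. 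Equivalently, one may aim to prove directly that the invariant set $N$ is a plane section $\Sigma\cap\Pi$: invariance then forces $\Pi=\Pi_H$, but $\phi$ being $\rho_k$-even while changing sign across $\Pi_H$ (where $\rho_k$ is odd) is impossible, and in any case a planar $N$ would make $\phi$ and the linear eigenfunction vanishing on $\Pi$ share a nodal set, forcing $\lambda_1=\tfrac12$ by uniqueness of the first Dirichlet eigenfunction on each side. As corroborating evidence that the invariant part of the spectrum begins at $\ge\tfrac12$, I would record the identity $\mathcal{L}(|x|^2-4)=-(|x|^2-4)$, exhibiting an explicit $\mathbb{D}_n$-invariant eigenfunction at eigenvalue $1$.
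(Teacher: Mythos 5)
Your opening matches the paper exactly: assume $\lambda_1<\tfrac12$, invoke Lemma \ref{lem: lambda_1<1/2 implies f_1 invariant} to make the first eigenfunction $\phi$ $\mathbb{D}_n$-invariant, and then aim to show that an invariant eigenfunction cannot have only two nodal domains, contradicting Theorem \ref{thm: courant}. But the entire content of the theorem lives in that last step, and there your argument has a genuine gap. You reduce to an ``interleaving'' pattern of $\ge 2n$ alternating arcs on $\gamma=\Sigma\cap\Pi_H$ and assert that two connected nodal domains cannot both join their alternating boundary arcs ``exactly as on a disk,'' proposing to justify this by passing to $\Sigma/\mathbb{D}_n\cong\Sigma^+/\langle R\rangle$ where ``the fundamental piece becomes simply connected.'' There is no reason for that quotient (or the fundamental patch) to be simply connected for a shrinker of arbitrary genus, and the planar interleaving obstruction is exactly the kind of Jordan-curve argument that fails on higher-genus surfaces: on a handle, two interleaved families of arcs can each be connected in the complement of the other. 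Since the whole point of the theorem is to handle arbitrary-genus symmetric shrinkers, this step cannot be waved through. (Smaller issues compound it: $\gamma$ need not be connected --- for the Angenent torus it is two circles --- so neither ``$N$ must meet $\gamma$'' nor the count of $2n$ sign changes around a single closed curve is established as stated; and your fallback ``one may aim to prove directly that $N$ is a plane section'' is an aspiration, not an argument.)

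For comparison, the paper closes this gap by working on the fundamental patch $P=\Sigma\cap T$, where $T$ is the wedge between two vertical planes $\Pi_{\langle v,h_k\rangle}$, $\Pi_{\langle v,h_{k+1}\rangle}$. It first shows $P\setminus N$ must be disconnected (otherwise $N=\Sigma\cap\Pi$ for a plane $\Pi$ and the orthogonality argument of the Lemma applies), and then runs a case analysis on how a component $D$ of $P\setminus N$ meets the axes $h_1,h_2$ and the horizontal plane. In each case it exhibits three \emph{distinct} nodal domains, the key tool being a curve-lifting argument: a hypothetical path in $\Sigma\setminus N$ joining $p\in D$ to $\rho_1(p)$ is pushed by the index-two subgroup $F\le\mathbb{D}_n$ into the doubled fundamental domain $T_2=T\cup\rho_1(T)$, where it cannot exist. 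That replacement of your global interleaving heuristic by a local, symmetry-driven lifting argument is precisely what makes the proof work independently of the topology of $\Sigma$; your proposal as written does not reach a proof.
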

\begin{proof}
    Suppose that $\lambda_1 <\frac{1}{2}$, let $\phi$ be an eigenfunction of $\mathcal{L}$ with eigenvalue $\lambda_1$ on $\Sigma$, and $N\subset \Sigma$ the nodal set of $\phi$. Then, by Lemma \ref{lem: lambda_1<1/2 implies f_1 invariant} $\phi$ must be invariant under $\mathbb{D}_n$. Let $T$ be a fundamental domain of $\mathbb{R}^3$, and $P=\Sigma \cap T$ be the corresponding fundamental patch of $\Sigma$.
    First we claim that $N$ contains an interior point of $P$ and $P\backslash N$ has at least two connected components. Suppose that $\bar{P} \cap N\subset \partial P$. Then by the $\mathbb{D}_n$-invariance of $\phi$, the Courant's nodal domain theorem \ref{thm: courant}, and the two-piece property \ref{thm: 2-piece prop} we conclude that $N=\Sigma \cap \Pi$ for some plane $\Pi$ which passes through the origin. By the orthogonality argument as used in Lemma \ref{lem: lambda_1<1/2 implies f_1 invariant}, we get a contradiction. Hence $P\backslash N$ is not connected. Let $\Pi_{h_1}$, $\Pi_{h_2}$ be the faces of $T$, and let $D$ be a connected component of $P\backslash N$. 
    \medskip
    
    First suppose that $\partial D$ is disjoint from the line $h_1$, and suppose that $\hat{D}$ is the image of $D$ rotated by $\pi$ around $h_1$, that is $\hat{D}=\rho_1(D)$. Denote by $D_1,D_2,D_3$ the components of $\Sigma \backslash N$ containing $D, \hat{D}$ and intersecting $h_1$, respectively. We claim that $D_1,D_2,D_3$ all are distinct. Suppose on the contrary that $D_1$ and $D_3$ are identical. Let $\hat{T}$ be the image of $T$ obtained by rotating by $\pi$ around $h_1$, and set $T_2=\hat{T}\cup T$. Choose $p\in D$ and let $\hat{p}=\rho_1(p) \in \hat{D}$ be its rotated image. By the assumption, there is a curve $\gamma$ in $\Sigma$ connecting $p$ to $\hat{p}$ which is disjoint from $N$. Let $F$ be the normal closure of the subgroup of $\mathbb{D}_n$ which is generated by the rotations by $\pi$ around the axis different from $h_1$. $F$ then is a subgroup of $\mathbb{D}_n$ of index 2 whose fundamental domain is $T_2$. Define $\tilde{\gamma}=\{ \rho (\gamma)\cap \bar{T_2};\ \rho \in F\}$. Then $\tilde{\gamma}$ is a curve in $\bar{T_2}$ connecting $p$ to $\hat{p}$ and disjoint from $N\cap T_2$, that is impossible. Hence $D_1$ and $D_3$ are distinct components. A similar argument applies to conclude that $D_1$ and $D_2$ are also distinct.
    \medskip
    
     Now suppose that, $\partial D $ intersects both $h_1$ and $h_2$. $\Pi_H$ divides $P$ into two connected components $P^+$ and $P^-$. First assume that $D\subset P^+$, then note that $D\neq P^+$, otherwise, by a simple argument and an application of the Courant nodal domain theorem we obtain $N=\Sigma \cap \Pi_H$ and we thus get a contradiction with $\lambda_1<\frac{1}{2}$. Let $\hat{D}=\rho_1(D)$, and denote by $D_1,D_2,D_3$ the components of $\Sigma \backslash N$ containing $D, \hat{D}$ and $P\backslash \{ D\cup N\}$, respectively. We claim that $D_1, D_2, D_3$ are all distinct. Let $\hat{T}=\rho_1(T)$ and $ \hat{P}=\rho_1(P)$, and note that $\hat{P}^+=\rho_1(P^-)$ and $\hat{P}^-=\rho_1(P^+)$ (this is because $h_1\subset \Pi_H$). Then, $\hat{D} \subset \hat{P}^-$. Suppose on the contrary that $D_1$ and $D_2$ are identical. Set $T_2=T\cup \hat{T}$. Choose $p\in D$ and let $\hat{p}=\rho_1(p) \in \hat{D}$ be its image. By the assumption, there is a curve $\gamma$ connecting $p$ to $\hat{p}$ which is disjoint from $N$. Consider $\tilde{\gamma}=\{ \rho(\gamma) \cap \bar{T}_2;\ \rho\in F\}$, where $F$ is as defined above. Then $\tilde{\gamma}$ is a curve in $\bar{T}_2$ connecting $p$ to $\hat{p}$ and disjoint from $N\cap T_2$. But this is impossible. Hence $D_1$ and $D_2$ are distinct components. The same argument can be used to conclude that $D_2$ and $D_3$ are also distinct.
     \medskip
     
     Finally, suppose that $\partial D$ intersects both $h_1$ and $h_2$, and also $D$ is not a subset of $P^+$ or $P^-$. Similarly we assume $\partial D^c$ intersects both $h_1$ and $h_2$, where $D^c=P\backslash \{N\cup D\}$ is not a subset of $P^+$ or $P^-$, as well. Since $\phi$ is invariant under $\mathbb{D}_n$, $N$ meets each line $h_k$, $k=1,\cdots ,n$.
     Set $\hat{D}=\rho_1(D)$, $\hat{D^c}=\rho_1(D^c)$. Suppose that $\phi \vert_D>0$ and $\phi \vert_{D^c} <0$, since $\phi$ is $\mathbb{D}_n$-invariant then $\phi \vert_{\hat{D}}>0$ and $\phi \vert_{\hat{D^c}} <0$. Obviously $N$ meets the vertical line $v$ both in $\Sigma^-$ and $\Sigma^+$. In fact, since $\phi$ has the opposite sign on $D$ and $\hat{D}$, $N$ meets $v$ at least $2n$ times in $\Sigma^-$ and $2n$ times in $\Sigma^+$ forming an equiangular system, see Theorem \ref{thm: Props. of nodal lines}. Recall that $h_k$s are in the plane $\Pi_H$ and $v$ is orthogonal to $\Pi_H$. Hence $N$ divides $\Sigma$ into more than two components. Therefore in all possible cases, $\phi$ has at least three nodal domains, contradicting the Courant's nodal domain theorem, thus $\lambda_1=\frac{1}{2}$.
\end{proof}

\begin{cor}
    The first eigenvalue of the drift Laplacian on self-shrinkrs constructed by  Kapouleas-Kleene-Moller \cite{kapouleas2018mean}, Buzano-Nguyen-Schulz \cite{buzano2025noncompact}, Ketover \cite{ketover2024self}, and Ilmanen and White \cite{ilmanen2024fattening} is equal to $\frac{1}{2}$.
\end{cor}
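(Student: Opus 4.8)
The plan is to deduce the corollary as a direct application of the preceding theorem, the only work being to verify, family by family, that each constructed surface meets its hypotheses: that it is a complete, embedded self-shrinker in $\mathbb{R}^3$ invariant under some dihedral group $\mathbb{D}_n$, and that its spectrum is discrete so that $\lambda_1$ is well-defined.

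First I would record the symmetry group of each construction as described in Section 5. The surfaces of Kapouleas, Kleene, and Moller are invariant under $\mathbb{D}_{2g+2}$ and those of Buzano, Nguyen, and Schulz under $\mathbb{D}_{g+1}$, so in both cases the surface is literally $\mathbb{D}_n$-invariant and the theorem applies with $n=2g+2$ and $n=g+1$ respectively. The surfaces of Ketover and of Ilmanen and White carry the prismatic symmetry $\mathbb{D}_{g+1}\times\mathbb{Z}_2$; here I would simply observe that $\mathbb{D}_{g+1}$ embeds as a subgroup of $\mathbb{D}_{g+1}\times\mathbb{Z}_2$, so invariance under the full prismatic group forces invariance under $\mathbb{D}_{g+1}$, and the theorem applies with $n=g+1$. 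In every case (using $g\ge 1$, so that $n\ge 2$) the surface meets the dihedral-invariance hypothesis.

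Next I would dispatch the analytic hypotheses underlying the spectral framework of Section 3. Completeness and embeddedness hold by construction in each cited paper. The discreteness of the spectrum of $\mathcal{L}$, needed for a well-defined $\lambda_1$, follows from the standing assumption of finite Gaussian area and polynomial volume growth: each noncompact example is properly embedded with a conical end, along which the Gaussian weight $e^{-\vert x\vert^2/4}$ decays super-exponentially, so the Gaussian area is finite, while polynomial volume growth holds for any properly immersed self-shrinker. With these in hand, the variational characterization of $\lambda_1$ and the Courant-type Theorem \ref{thm: courant} apply, and the preceding theorem yields $\lambda_1=\frac{1}{2}$ on each surface.

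The main obstacle is not the logical structure, which is a one-line appeal to the theorem, but the bookkeeping of pinning down, from each source, the precise symmetry group together with the properness and finite-area statements that guarantee a discrete spectrum. Several of these constructions assert their exact symmetry type and one-endedness only conjecturally or ``likely'', so the delicate point is to cite, in each case, the statement that actually certifies dihedral invariance and proper embedding; once these are secured, nothing further is needed beyond the subgroup remark for the prismatic examples.
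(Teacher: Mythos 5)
Your proposal is correct and matches the paper's (implicit) argument: the corollary is stated without proof as an immediate application of the preceding theorem, using the symmetry groups recorded in Section 5, with the prismatic cases handled exactly by your observation that $\mathbb{D}_{g+1}$ sits inside $\mathbb{D}_{g+1}\times\mathbb{Z}_2$. Your extra care in checking completeness, embeddedness, and finite Gaussian area is a reasonable elaboration of hypotheses the paper simply assumes.
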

\subsection*{Acknowledgment} I wish to thank my advisor, Jacob Bernstein, for suggesting the problem and for the helpful comments.

\end{document}